\numberwithin{equation}{section}
\newtheorem{theorem}[equation]{Theorem} 
\newtheorem{proposition}[equation]{Proposition}
\newtheorem{claim}[equation]{Claim} 
\newtheorem{corollary}[equation]{Corollary}
\newtheorem{proposal}{Proposal}
\theoremstyle{definition}
\newtheorem{definition}[equation]{Definition}
\theoremstyle{remark}
\newtheorem{remark}[equation]{Remark}
\newtheorem{example}{Example}
\newtheorem{question}{Question}
\DeclareMathOperator{\Acat}{A _{\infty}Cat _{R} ^{\mathbb{Z}
_{n}, strict}}
\DeclareMathOperator{\obj}{obj}
\DeclareMathOperator{\D}{\Delta}  
\begin{document}
\title{Hamiltonian elements in algebraic K-theory}
\author{Yasha Savelyev} 
\thanks {Supported by CONAHCYT 
research grant CF-2023-I}
\address{University of Colima, faculty of science, 
Bernal Díaz del Castillo 340,
Col. Villas San Sebastian,
28045 Colima Colima, Mexico}
\keywords{}
\begin{abstract}  
A Hamiltonian bundle $M \hookrightarrow P \to X$ (with
monotone compact fibers) induces via Floer theory
a type of ``bundle of $A _{\infty}$ categories'' over $X$,
with fiber given by the Fukaya category of $M$. Morita
theory of $A _{\infty} $ categories, the above picture for $X=S
^{m}$, and geometric representation theory yield the
following: if $G$ is a compact Lie group and $R$ is a
commutative ring then there is a natural group homomorphism
$\pi _{m} (BG) \to K ^{Cat}_{m}(R)  $, where $K
^{Cat} _{m} (R)$ are a type of categorified algebraic
$K$-theory groups of $R$, analogous to To\"en's secondary
$K$-theory. We also construct underlying maps of this type to
classical algebraic $K$-theory of $R$.  
This framework gives a geometry-powered proof that $K ^{Cat} _{2} (\mathbb{Z} )$ is infinitely
generated (with the details to appear in a future work). This is in contrast
to Quillen's finite generation result for standard algebraic
$K$-theory of $\mathbb{Z} $. 
Taking the Langlands
dual of $G$, we explore a conjectural relationship between the
images of the corresponding homomorphisms above.  
\end{abstract}
\maketitle
\section{Introduction}
A guiding idea is that a Hamiltonian bundle $M
\hookrightarrow P \to Y$  gives rise to a kind of bundle of $A _{\infty}$
categories, with fiber the Fukaya category of $M$.
Precisely what this means, is an interesting mathematical problem in its own right, and one answer is developed
in ~\cite{cite_SavelyevGlobalFukayacategoryI}. Notably, these are not local systems; as shown in ~\cite{cite_SavelyevGlobalFukayacategoryII}, the resulting bundles are generally non-trivial over spheres. 

The second idea, due to
To\"en~\cite{cite_ToenThehomotopytheoryofdg-categoriesandderivedMoritatheory}, is that there is a refinement
of algebraic $K$-theory of a commutative ring $R$, by replacing
$R$-modules with $dg$ or $A _{\infty}$ categories over $R$.
We show that these two
ideas naturally fit together. We use the above
bundle assignment, with $Y=S ^{m}$,  to associate to each
such $P$ an element in categorified and standard algebraic
$K$-theory of $R$. In particular, we get a geometric
insight into algebraic $K$-theory
of $\mathbb{Z} $, especially its categorified variant $K
^{Cat} _{m} (\mathbb{Z} )$. For example, we obtain an infinite
generation result for $K ^{Cat} _{2} (\mathbb{Z} )$. At the
end we explore connections with Langlands duality. We thus
link algebraic $K$-theory, Floer theory, and
aspects of geometric representation theory in one framework.  

Although in the main examples here the principal actors are
compact groups, and in particular $PU (n)$, the framework is
most naturally understood from the perspective of the group
of Hamiltonian symplectomorphisms of some symplectic
manifolds. We quickly introduce the latter.
Let $(M, \omega )$ be a symplectic manifold, throughout
assumed to be compact, with $\omega
$ a closed nondegenerate 2-form on $M$. The group of
diffeomorphisms $\phi $ of $M$,
satisfying $\phi ^{*} \omega = \omega  $, is called the
symplectomorphism group of $(M, \omega )$. When $M$ is
simply connected, this
is also the group of Hamiltonian symplectomorphisms, which we
denote by $\mathcal{H}  = \operatorname {Ham} (M,  \omega)
$. In general,
a symplectomorphism is Hamiltonian if it is the time-one map
of an isotopy generated by a family of vector fields $\{X
_{t}\}$, $t \in [0,1]$, satisfying $$\omega (X _{t}, \cdot) = dH _{t}
(\cdot),$$ for
some family of smooth functions $\{H _{t}\}$ on $M$.

The group $\mathcal{H} $ is one of the
more enigmatic objects of mathematics. It is always (unless
$M=pt$) an
infinite-dimensional Fr\'echet Lie group with its $C
^{\infty}$ topology. But it has a natural, bi-invariant
Finsler metric called the Hofer metric. So in some ways
it behaves like a compact group. Its topological and algebraic
structure is tied to the main problems of Hamiltonian dynamics, see for instance Polterovich
~\cite{cite_PolterovichThegeometryofthegroupofsymplecticdiffeomorphism}.

Much of the algebraic/topological structure of $\mathcal{H} $ is reflected in the
properties of Hamiltonian fibrations, which are the main
geometric ingredients in what follows. A \emph{Hamiltonian fibration} $M \hookrightarrow P \to Y$ is
a smooth fiber bundle over $Y$, with fiber a symplectic
manifold $(M,\omega)$, whose
structure group is contained in $\operatorname {Ham} (M,
\omega) $.  For example, take $(M, \omega
) = (\mathbb{CP} ^{n}, \omega _{FS})$, where $\omega _{FS}$
is the Fubini-Study symplectic form. We will from now on omit $\omega
_{FS}$ from all notation; it is implicitly always the chosen
symplectic form on $\mathbb{CP} ^{n} $.
And suppose for instance that the structure
group of $\mathbb{CP} ^{n} \hookrightarrow P \to Y$ is $G= PU (n+1) \subset \operatorname {Ham} (\mathbb{CP}
^{n}) $. 

The other ingredient is algebraic $K$-theory, which is a construction associating
an infinite loop space \footnote{In this paper, the term infinite loop space is meant to be synonymous with connective $\Omega$-spectrum, i.e. a sequence of spaces $\{E _{n}\} _{n \in
\mathbb{N}}$ with $E _{n} \simeq \Omega {E _{n+1}}$.  Then
$K (R)$ is the notation for the $0$-space $E _{0}$ of the
corresponding spectrum.} $K (R)$ to any ring $R$; and so abelian groups:
\begin{equation*}
K _{m} (R) = \pi _{m}  (K (R)), \quad m \in \mathbb{N},
\end{equation*}
called $K$-theory groups of $R$.
This is a powerful tool in
algebraic topology and ring theory, with applications for
instance in
number theory and geometric topology.

One motivating example for algebraic $K$-theory is the following.
Take $R $ to be the non-commutative ring  $\mathcal{K}$ of compact operators on a separable complex Hilbert space,  the
algebraic $K$-theory space $K (\mathcal{K})$ \footnote
{Rather one must take a certain non-connective variant of the algebraic $K$
theory construction.} is
identified with the topological $K$-theory space $KU = BU
\times \mathbb{Z} $, see Karoubi ~\cite{cite_KaroubiKtheory},
Suslin-Wodzicki ~\cite{cite_SuslinWodAlgebraicKtheory}.   
Thus a complex vector bundle $E$ over a space $X$
determines a homotopy class $[f _{E}: X \to
K (\mathcal{K})]$, which we might call a \emph{geometric
element} of the corresponding abelian group.
We shall show that in a certain sense this extends to
a general ring $R$, provided it is commutative, and
replacing complex vector bundles by Hamiltonian fibrations. 

For each commutative ring $R$, we construct abelian groups $K ^{Cat}
_{m}(R)$, called categorified
algebraic $ K$-theory groups of $R$.  These
can be understood to be generated by the aforementioned bundles of $A _{\infty}$ categories over $S ^{m}$ with fibers $ A _{\infty} $ categories with finite type, see Definition \ref{def_finitetype}. 
In principle, this
categorified algebraic $K$-theory of $R$ is only a slight
variation of the secondary $K$-theory of $R$ introduced by
To\"en, with the main difference being that $ \mathbb{Z} $-graded
chain complexes are replaced by $\mathbb{Z} _{2}$-graded
chain complexes. 
The following is
a conjunction of Theorem \ref{thm_compactmonotoneWHFP} and
Corollary \ref{cor_CPn}.
\begin{theorem} \label{thm_IntroI} 
If the Fukaya category of $ M$ is finite type, then for each
$R$ the data of the isomorphism class of $M \hookrightarrow
P \to S ^{m}$ determines
an element in $K ^{Cat} _{m} (R)$. 
This determines homomorphisms $$\phi _m: \pi _{m} (B
\operatorname {Ham} (\mathbb{CP} ^{n})) \to
K ^{Cat} _{m} (R),$$
and so homomorphisms $$\phi _m: \pi _{m} (BPU (n)) \to
K ^{Cat} _{m} (R).$$
\end{theorem}
\begin{example} \label{exm_pu2}
In the case of $n=1$, in
~\cite{cite_SavelyevGlobalFukayacategoryII}, using geometric
arguments we outline the proof of non-triviality of  
$\phi  _{4}$ for $R=\mathbb{Z}$.  This contrasts with a theorem of Rognes
\cite{cite_RognesK4Zistrivial} on the triviality of $K _{4}
(\mathbb{Z} ) $.
\end{example}
\begin{claim} $$\phi  _{2}: \mathbb{Z} _{p} \simeq \pi _{2} (B
PU (p)) \to K _{2} ^{Cat} (\mathbb{Z} )$$ is injective for
each prime $p$. Consequently, $K
_{2} ^{Cat} (\mathbb{Z} )$ is infinitely generated.
\end{claim}
Since the groups $K _{m}
(\mathbb{Z} )$ are always
finitely generated by the foundational
results of Quillen
\cite{cite_QuillenHigherAlgebraicKtheory}, the claim is perhaps surprising.

One geometric ingredient for the proof is the now
classical part of symplectic geometry: the Seidel
homomorphism
~\cite{cite_Seidelpi1ofsymplecticautomorphismgroupsandinvertiblesinquantumhomologyrings}.
The details will appear in a future work.
\subsection{Generalizing to $G/T$.} \label{sec_Generalizing to GT.}
Theorem \ref{thm_IntroI} extends to $M=G/T$, where $G$ is
compact.
However, the details in that case also involve
non-elementary geometric representation theory, see Section
\ref{sec_Generalizing to GT}. We still get natural
maps 
\begin{equation*}
  \pi _{m} (B \operatorname {Ham}
	(G/T)) \to K ^{Cat} _{m}
	(R),
\end{equation*}
but after fixing a certain choice (a distinguished toric
degeneration of $G/T$). Without that we get into questions of
finiteness of the Fukaya category $\operatorname {Fuk} (G/T) $. %

\subsection{Pushing forward to classical $K$-theory} \label{sec_Pushing forward to classical $K$-theory}
To partially bypass the need for choices, we can pass to
classical algebraic $K$-theory,
instead of its categorified analogue.  
We take a 2-periodic variant of classical algebraic
$K$-theory of $R$, which we denote by $K _{m}^{\mathbb{Z}
_{2}} (R)$. Technically, $K ^{\mathbb{Z} _{2}} (R)$ is the
algebraic $K$-theory infinite loop space $K (W)$, where $W$ is the Waldhausen category of 2-periodic complexes of $R$-modules. 

\begin{remark} \label{rem_Toen}
It was
pointed out to me by Bertrand To\"en that for $R$ a regular
ring, for example $R= \mathbb{Z} $, $K ^{\mathbb{Z} _{2}}
_{m} (R) \simeq K _{m} (R) \oplus
K _{m-1} (R)$. This can be proved with $A ^{1}$ homotopy
theory. Although we don't technically need this.
\end{remark}
The same basic framework of the paper yields the following.
\begin{theorem} \label{thm_intro}
For any compact Lie group $G$, there is
a homotopy natural map: 
\begin{equation} \label{eq_intro2}
h: \operatorname {BG} \to K ^{\mathbb{Z} _{2}}
(R),
\end{equation}
and so there are natural homomorphisms $$h _{m}: \pi _{m} (\operatorname {BG}) \to
K ^{\mathbb{Z} _{2}} _{m} (R).$$
\end{theorem}
Taking the Langlands dual of $G$, we explore correspondence
of the respective mappings in Section \ref{sec_Mirror}. 

\begin{question} \label{que_1}
Is it possible to obtain the classes $$[\operatorname {BG}
\to K ^{\mathbb{Z} _{2}}(R)] $$ by techniques of homotopy
theory and algebraic geometry? In other words, can we bypass the geometric analysis of
the construction? 
\end{question}
It should be noted that compared to the maps $\phi _{m}$,
showing non triviality of the maps $h _{m}$ is harder (and
is an open problem), even
restricting to $PU (n)$ and $R=\mathbb{Z} $. This might seem
paradoxical, but the idea is that $K ^{Cat} _{m} (R)$ sees more
information, so it is easier to find nontrivial things in
the image of $\phi _{m}$. 

%
%

\section{Preliminaries} \label{sec_Preliminaries}
We will mostly follow Keller
~\cite{cite_KellerOndifferentialgradedcategories}, but the base commutative ring will be denoted 
$R$ to avoid overloading notation. An $A
_{\infty}$ category $C$ over $R$ is a form of
non-associative dg-category, with $hom _{C}(a,b)$ a cochain
complex over $R$. We refer the reader to standard references
for further details, see for instance
\cite{cite_colimits} which will come in use later.

Denote by $A
_{\infty}\operatorname {Cat}  _{R} ^{strict}$ the category of strictly unital, $\mathbb{Z} $-graded $A _{\infty}$ categories with
morphisms being strict unital $A _{\infty} $ functors. We set
$\Acat$ to be like $A _{\infty}\operatorname {Cat}  _{R}
^{strict}$;  but with chain complexes
$\mathbb{Z} _{n}$-graded instead of $\mathbb{Z} $-graded.
This means that all structure maps, functors, etc., are with 
respect to the $\mathbb{Z} _{n}$-grading.
Various subscripts $R$ may be omitted.


There is an endofunctor $$T: \Acat \to \Acat, $$ s.t. $T (B)$ is idempotent complete and pretriangulated
for each $B$. We may denote
by $\widehat{C} $ the category $T (C)$.

Let $\operatorname {Cat}  _{R} ^{\mathbb{Z} _{n}}$ denote the category of small $R$-linear $\mathbb{Z} _{n}$-graded categories.
For $C \in \Acat$, 
set $hC \in Cat _{R} ^{\mathbb{Z} _{n}}$ to be the category with the same
objects and with $hom _{hC} (a,b) = H ^{*} (hom _{C} (a,b))$.

Given a morphism $f: A \to B$ in $\Acat$, we denote by $f _{*}: hA \to hB$ the induced morphism in $Cat _{R} ^{\mathbb{Z} _{n}}$. We say that
$f$ is a \emph{quasi-equivalence} if $f
_{*}$ is an equivalence.  
We will say that $f$ is
a \emph{Morita equivalence} if the induced
functor $T (A) \to T (B)$ is a quasi-equivalence. We denote by $Hmo ^{\mathbb{Z} _{2}}$ the
homotopy category of $A _{\infty}\operatorname {Cat}  _{R} ^{strict,
\mathbb{Z} _{2}}$ with respect to Morita equivalences, as
explained by Keller this
is a pointed category with the zero object: the
category $ 0$ with one object and zero endomorphism
complex.

The category $\Acat$ is cocomplete, see
\cite{cite_colimits}. Furthermore, it has a model category
structure \cite{cite_modelstructureAinftyCat}. Applying
left Bousfield localization along Morita equivalences we get
what is often called the Morita model structure. Although,
we do not explicitly need the language of model structures.
The main algebraic ingredient is the following.

\begin{definition}\label{def_shortexactdgsequence} A \textbf{\emph{short
exact sequence}} in $A _{\infty}\operatorname {Cat}  _{R} ^{strict, \mathbb{Z}
_{2}}$ is a sequence $A \xrightarrow{I} B 
\xrightarrow{P} C$, such that in $Hmo ^{\mathbb{Z} _{2}}
$, $I$ is the kernel of $P$ and $P$ is the cokernel of $I$. 
\end{definition} 
As explained by Keller in
~\cite{cite_KellerOndifferentialgradedcategories}, an
explicit example for such a short
exact sequence is given by a Drindfeld dg-quotient
sequence $A \hookrightarrow B \to B/A$.

\subsection{Simplicial sets} \label{sec_Simplicial sets}
We will need basic theory of simplicial sets, particularly
the notion of a simplex category, for the definition
of bundles of $A _{\infty} $ categories.
We denote by $\D$ the simplex category:
\begin{itemize}
	\item The set of objects of $\Delta^{} $ is $\mathbb{N}$.
	\item $\hom _{\D}
 (n, m) $ is the set of non-decreasing maps $[n] \to [m]$, where $[n]= \{0,1, \ldots, n\}$, with its natural order.
\end{itemize} 
A simplicial set $X$ is a functor                                 
\begin{equation*}
   X: \D ^{op}  \to Set.
\end{equation*} 
$X (n)$ is the set of $n$-simplices of $X$.
$\D ^{d}  $ will denote a particular
simplicial set: the standard representable
$d$-simplex, with $$\D ^{d} (n) = hom _{\D} (n, d).                                         
$$ 
Morphisms of simplicial sets are given by natural
transformations of functors and this forms a category
denoted by $sSet$.

An important ingredient in our construction will be the
following.  
\begin{definition} \label{definition_simplexcategory} For $X$ a simplicial set, $\Delta (X)$ will denote the \textbf{\emph{simplex category of $X$}}. This is the
category such that: 
\begin{itemize}
	\item The set of objects $\obj \Delta (X) $  is the set
of simplicial maps:
$$\Sigma: \Delta ^{d} \to X, \quad d \geq
   0. $$
	 \item Morphisms $f: \Sigma _{1} \to \Sigma
      _{2}$ are commutative diagrams in $sSet$:
\begin{equation} \label{eq:overmorphisms1}  
\begin{tikzcd}
\D ^{d} \ar[r, ""] \ar [dr,
   "\Sigma _{1}"] &  \D ^{n} \ar
   [d,"\Sigma _{2}"] \\
    & X.
\end{tikzcd}
\end{equation}
\end{itemize}
\end{definition}

Given a morphism $f: X \to Y$ of simplicial sets, we set
$ \Delta^{} f: \Delta^{} (X) \to \Delta^{}(Y)  $ to be
the functor defined on objects by     
$\Delta f (\Sigma) = f \circ \Sigma $. If $m: \Sigma _{1} \to \Sigma _{2}$ is a morphism in $\Delta (X) $:
\begin{equation*}
\begin{tikzcd}
\Delta^{k} \ar[r,
   "{m}"] \ar [dr,
   "\Sigma_1"] & \Delta^{d}  \ar
   [d,"\Sigma _{2}"] \\
   &  X,
\end{tikzcd}
\end{equation*}
then obviously the diagram below also
commutes:
\begin{equation*} 
\begin{tikzcd}
\Delta^{k} \ar[r,
   "{m}"] \ar [dr,
   "h_1"] & \Delta^{d}  \ar
   [d,"h _{2}"] \\
   &  Y,
\end{tikzcd}
\end{equation*}
where $h _{i}= \Delta f (\Sigma _{i}) = f \circ \Sigma
_{i}$,
$i=1,2$.   And so the latter diagram determines a morphism $\Delta f (m):  h _{1} \to h _{2}  $ in $\Delta
(Y) $.  Clearly, this
determines a functor $\Delta f$ as needed.

\begin{definition}\label{definition_concordaceoffunctors}
Let $X$ be a simplicial set, and $C$ any category. Let $F_i:
\Delta^{} (X) \to C$, $i=0,1$	be functors.
A \textbf{\emph{concordance}} of $F _{0}$ to $F _{1}$  is
a functor $$\widetilde{F}: \Delta^{} (X \times \Delta^{1})
\to C$$ such
that $\widetilde{F}| _{\Delta^{} (X \times \{0\})} = F _{0} $ and $\widetilde{F}| _{\Delta^{}
(X \times \{1\})} = F _{1} $, where $\{0\}, \{1\}$ here
denote the images of the natural end point inclusions
$\Delta^{0} \to \Delta^{1} $.
\end{definition}


\section{Definition of the categorified algebraic K-theory
groups of $R$} \label{sec_definition}
We need to impose some algebraic finiteness conditions on our
${A} _{\infty}$ categories. otherwise, any $K$-theory
we construct would be trivial due to the Eilenberg swindle;
the same reason that $K$-theory of infinite-dimensional
vector bundles is trivial.
A natural requirement is that our ${A} _{\infty}$
categories be smooth and proper, or saturated in the sense of
Kontsevich-Soibelman ~\cite{cite_KontsevichHomologicalSmooth}, as this is a dualizability
condition, see ~\cite{cite_ToenLectures}. In the $\mathbb{Z}
_{n}$-graded setting, the analogue of this condition is more
complex, we will instead work with
a less restrictive notion, which is not too hard
to check for geometric examples.
%

Set $\mathcal{A} = \mathcal{A} _{R} \subset {A} _{\infty}Cat
_{R}
^{\mathbb{Z} _{2}, strict}$ to be the
full subcategory  whose objects are finite type, where the latter is defined as follows. 
\begin{definition} \label{def_finitetype}
We say that $B \in {A} _{\infty}\operatorname {Cat}  _{R}
^{\mathbb{Z} _{2}, strict} $ is \textbf{\emph{finite type}}
if the following conditions are satisfied. 
\begin{itemize}
	\item $B$ is Morita equivalent to an $A _{\infty}$ algebra, whose underlying chain complex is
projective and finitely generated over $R$ in each degree.
\item In the category of 2-periodic complexes
over $R$, the Hochschild homology complex of $B$ over $R$ is
quasi-isomorphic to a complex that is
projective and finitely generated over $R$ in each degree.
\end{itemize}
\end{definition}
Set $\widehat{\mathcal{A}} _{R} \subset \mathcal{A} _{R}$ to
be the full subcategory with objects pretriangulated and
idempotent complete. As outlined by To\"en in
~\cite{cite_ToenThehomotopytheoryofdg-categoriesandderivedMoritatheory},
$\widehat{\mathcal{A}} _{R} \subset \mathcal{A} _{R}$ has
the structure of a Waldhausen category. 
The cofibrations are taken to be fully-faithful,
strict, unital, injective on sets of objects $A _{\infty}$
functors.  Weak equivalences are Morita equivalences.
To be strict, this is not a Waldhausen
category  but $\infty$-Waldhausen in the sense of Barwick ~\cite{cite_BarwickInfinityWaldhausen}, as for example the zero category $0$ (the
category with one object and zero complex as endomorphism
set) is not initial in $\widehat {\mathcal{A}} _{R}$  but the hom-sets $hom
_{\widehat {\mathcal{A}} _{R}} (0, A)$ are contractible
when taken in the $ \infty $-localization of $ \widehat
{\mathcal{A}} _{R}$ with respect to the Morita equivalences. 

Given this we define $$K ^{Cat} _{m} (R) = \pi _{m}
K (\widehat{\mathcal{A}} _{R}),$$ where $K
(\widehat{\mathcal{A}} _{R})$ is the associated Waldhausen
spectrum. Calculations from this general perspective are 
difficult, we now describe a more concrete presentation. 
\subsection{Concrete presentation of $K ^{Cat} _{m} (R)$} \label{sec_Concrete presentation of $K ^{Cat} _{m} (R)$}
%
%
%
%
%
\begin{definition}\label{def_bundleAinfty}
Let $X$ be a simplicial set. A \textbf{\emph{bundle of
$A _{\infty} $ categories over $X$}} is a functor
$F: \Delta^{}  (X) \to w \widehat{\mathcal{A}} _{R}$. $F$ will be called a \textbf{\emph{bundle functor over $X$}}.
\end{definition}

Suppose we have bundle functors over $X$,
fitting into a sequence of natural transformations of
functors to $ \widehat{\mathcal{A}} _{R}$ (as opposed to $w
\widehat{\mathcal{A}} _{R}$):
\begin{equation} \label{eq_shortexactseqFunctors}
F _{0} \to  F _{1} \to F _{2},
\end{equation}  
such that for each object $\Sigma \in \Delta^{}  (S ^{k})$
the resulting sequence in $\mathcal{A} _{R}$ is a short
exact sequence. Then we call $F _{0} \to F _{1} \to F _{2}$
a \textbf{\emph{short exact sequence}}.

\begin{definition}\label{def_}
Let $X$ be a Kan complex, 
Set $B (X, R)$ to be the free abelian group generated by the set
of concordance classes $[F]$ of functors $F: \Delta^{}  (X
) \to w\mathcal{A} _{R}$.  And we define $K ^{Cat} (X, R)$
to be the quotient of $B (X,R)$ by the subgroup generated by
\begin{equation*}
\{ [F _{0}] - [F _{1}] + [F _{2}] \,|\,   \text{there is
a short exact sequence $F _{0} \to F _{1} \to F _{2}$} \}. 
\end{equation*}
\end{definition}
This is clearly contravariantly functorial in $ X$ so if $f:
X \to Y$ is a simplicial map then we get a group
homomorphism $f ^{*}: K ^{Cat} (Y, R) \to K ^{Cat} (X, R)
$ defined on generators by $ f ^{*} ([F]) = [F \circ \Delta^{}f
]$. 

The groups $K ^{Cat} (X, R)$  are covariantly functorial in
$R$ by ``extension of scalars''.
Furthermore, $X \mapsto K ^{Cat} (X, R)$ gives a contravariant homotopy functor, as by
definition of concordance it is clear that if $f: X \to Y
$ is homotopic to $ g$ then $f ^{*} = g ^{*} $. In
particular if $X _{0}$ is homotopy equivalent to $ X _{1}$
then $K ^{Cat} (X _{0}, R) = K ^{Cat} (X _{1}, R) $.

Let $S ^{n} _{\bullet }$ be the singular complex of $S
^{n}$. 
\begin{definition}\label{def_} 
For $ k>0$, we define $$ K ^{Cat} _{n} (R) = \ker \{(
\Delta^{0}  \to S ^{n}
_{\bullet}) ^{*}: K ^{Cat} (S ^{n}
_{\bullet}, R) \to K ^{Cat}
(\Delta^{0},
R)\}, $$  
and define $$ K ^{Cat} _{0} (R) = K ^{Cat} (\Delta^{0}, R).$$
In other words, $K ^{Cat} _{0} (R)$ is the quotient of the
free abelian group generated by Morita equivalence classes $[C]$, by
the subgroup generated by:
\begin{equation*}
\{ [C _{0}] - [C _{1}] + [C _{2}] \,|\,   \text{there is
a short exact sequence $C _{0} \to C _{1} \to C _{2}$} \}. 
\end{equation*}
\end{definition}

To see relationship between the pair of definitions of $K
^{Cat} _{n} (R)$,
note that for $X$ a Kan complex a bundle functor $F: |\Delta^{}  (X)| \to w\mathcal{A} _{R}$
induces a sequence 
\begin{equation}\label{eq_inducedsequence}
   |X| \simeq |\Delta^{}  (X)|
\xrightarrow{|F|} |w \mathcal{A} _{R}| \xrightarrow{\frak i}
K ^{Cat} (R),
\end{equation}
where $|X|$ also denotes the geometric realization of $X$.
And clearly a concordance between $F _{0}, F _{1}$ induces
a homotopy between $\mathfrak i \circ F _{i}$. 
Thus we get an induced map:
\begin{equation}\label{eq_J}
\widetilde{\mathcal{J}} _{X}: B (X, R) \to Maps(|X|, K
^{Cat} (R)),
\end{equation}
where $hMaps(|X|, K ^{Cat} (R))$ is the set of
free homotopy classes $[|X| \to K ^{Cat} (R) ]$. As $K
^{Cat} (R)$ is an infinite loop space, $hMaps(|X|, K ^{Cat} (R))$ is naturally an abelian group. 
\begin{claim} The map $\widetilde{\mathcal{J}} _{X} $ passes to
the quotient, inducing a map 
\begin{equation}
{\mathcal{J}} _{X}: K ^{Cat} (X, R) \to hMaps(|X|, K
^{Cat} (R)).
\end{equation}
Furthermore, $\mathcal{J} _{X} $ is a group isomorphism, and 
induces a natural isomorphism $\mathcal{J} $ from the $Ab$-valued
functor $X \mapsto K ^{Cat} (X, R) $ to the functor $X
\mapsto Maps(|X|, K ^{Cat} (R)) $.

\end{claim}
The proof of the claim is postponed, as it involves various
algebraic topology details. Note that in general one cannot always concretely present
Waldhausen $K$-theory groups in this manner. 
The proof in our setting hinges on the fact that the  $\infty
$-localization of $\widehat{\mathcal{A}} _{R}$ with respect to Morita equivalences  
has a concrete model $\mathcal{M} $, such that concordance
classes of functors $F: \Delta (X) \to
w \widehat{\mathcal{A}} _{R}  $ are in correspondence with homotopy classes $[X \to Core
(\mathcal{M})]$, where $Core (\mathcal{M} )$ is the maximal
Kan subcomplex. As remarked in the paragraph following
Definition \ref{def_finitetype}, this $\mathcal{M} $  is the $\infty $-Waldhausen category from which the spectrum $K ^{Cat}
(R)$ is constructed.

Given the claim, there is clearly an induced map 
\begin{equation}\label{eq_wtJ}
 {\mathcal{J}} _{m}: K ^{Cat} _{m} (R) \to
\pi _{m} (K ^{Cat} (R)),
\end{equation}
and it is a group isomorphism. 

\subsection{A very basic computation} \label{sec_Relation with Toen's secondary K-theory}

%
%

We recall the definition of $K _{0} (R)$. This is
the quotient of the free abelian
group generated by the set of isomorphism classes of
projective finitely generated $R$-modules, by the
subgroup generated by
\begin{equation*}
\{[A] - [B] + [C] \,|\,   \text{exists an exact sequence $0
\to A \to B \to C \to 0$}. \} 
\end{equation*}
If $\mathcal{E} $ is a triangulated category,  
one defines $K _{0} (\mathcal{E} )$  to be the quotient of
the free abelian group, generated by the set of isomorphism
classes in $\mathcal{E} $, by the
subgroup generated by
\begin{equation*}
\{ [A] - [B] + [C] \,|\,   \text{exists a distinguished
triangle $ A \to B \to C \to A[1]$} \}.
\end{equation*}
It is known that $K
_{k} (D ^{perf} (Mod _{R}))  \simeq K _{k} (R)$,
where 
$Mod _{R}$ denotes the category of projective finitely
generated $R$-modules and 
$D ^{perf} (Mod _{R})$
denotes the derived category of perfect complexes.


If $D _{n} (Mod _{R})$ denotes the $n$-periodic derived
category,
we no longer have that
$K _{0} (R) \simeq K _{0} (D _{n} (Mod _{R}))$, as
there are certain corrections. However we have the
following.
\begin{theorem} [Saito~\cite{cite_SaitoPeriodicDerived}] \label{theorem_Saito}
For $n$ even, the natural homomorphism:
\begin{align} \label{eq_psi}
& \psi: K _{0} (D _{n} (Mod _{R})) \to K _{0} (R) \\
& \psi ([V]) = \sum_{i = 0}^{i=n}	(-1) ^{i}[H ^{i} (V)],
\end{align}
is an isomorphism.
\end{theorem}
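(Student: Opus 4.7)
The plan is to exhibit an explicit two-sided inverse to $\psi$ and verify both compositions, with the main content concentrated in one of them.

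First I would verify well-definedness of $\psi$. A distinguished triangle $V' \to V \to V''$ in $D_n(\mathrm{Mod}_k)$ produces a long exact sequence in periodic homology, which is naturally a cyclic exact sequence of total length $3n$. Since $n$ is even, $3n$ is even, and the alternating sum of the terms taken once around this cycle vanishes. Grouping the summands according to $V'$, $V$, and $V''$ gives the additivity relation $\psi([V]) = \psi([V']) + \psi([V''])$. The evenness hypothesis enters exactly here: traversing one period flips the alternating sign $3n$ times, which returns to the initial sign only when $n$ is even, so for odd $n$ the naive cyclic alternating sum is not sign-consistent and $\psi$ would not descend to $K_0$.

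Next I would construct the candidate inverse $\varphi : K_0(k) \to K_0(D_n(\mathrm{Mod}_k))$ by sending $[M]$ to the class of the $n$-periodic complex with $M$ placed in degree $0$ and zero differentials. A short exact sequence of finitely generated projective modules promotes term-by-term to a distinguished triangle of such concentrated periodic complexes, so $\varphi$ is well defined. The composition $\psi \circ \varphi$ is then immediately the identity on $K_0(k)$, since the periodic homology of this concentrated complex is $M$ in degree $0$ and zero in degrees $1, \dots, n-1$.

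The heart of the argument is the other composition $\varphi \circ \psi = \mathrm{id}$. Using the standard triangulated identity $[X[1]] = -[X]$ together with the shift relation between ``$M$ in periodic degree $0$'' and ``$M$ in periodic degree $i$'', this reduces to showing that every perfect $n$-periodic complex $V$ satisfies $[V] = \sum_{i=0}^{n-1} [H_i(V) \text{ in degree } i]$ in $K_0(D_n(\mathrm{Mod}_k))$. I would establish this by constructing a Postnikov-type tower of distinguished triangles inside $D_n$ whose successive cofibres are the shifted homology groups of $V$. The main obstacle is that the usual good truncations $\tau_{\leq i}$ do not preserve $n$-periodicity, so the classical bounded-derived filtration argument cannot be copied verbatim. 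I would circumvent this by passing to a strict periodic model $V_0 \to V_1 \to \cdots \to V_{n-1} \to V_0$ of projectives, assembling the filtration from subcomplexes and quotient complexes within one period window, and then invoking the canonical identification $V[n] \cong V$---which is sign-coherent precisely when $n$ is even---to close the filtration up into genuine periodic objects whose classes additively recover $[V]$.
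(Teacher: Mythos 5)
The paper provides no proof of this statement; it is cited directly from Saito. So the comparison here is of your proposal against the mathematics itself rather than against a proof in the text.

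Your well-definedness argument for $\psi$ is sound: a distinguished triangle in $D_n$ gives a cyclic exact sequence of period $3n$ in homology, and the alternating sum around a cyclic exact sequence of even length vanishes in $K_0$, while for odd length the sign is inconsistent. The construction of $\varphi$ and the computation $\psi \circ \varphi = \mathrm{id}$ are also fine. The gap is in the last step. You want $[V] = \sum_i [H_i(V)[i]]$ in $K_0(D_n(\mathrm{Mod}_k))$, but the proposed mechanism --- a Postnikov filtration ``within one period window'' patched up by $V[n] \cong V$ --- does not produce a valid filtration. Brutal truncations of a periodic complex are not subcomplexes at all (the differential $V_0 \to V_{n-1}$ obstructs taking $V_0$ alone as a periodic subobject), and the good truncations that \emph{are} periodic subcomplexes, such as replacing $V_0$ by $\ker(d_0)$, take you outside complexes of projectives, since $\ker(d_0)$ and $\operatorname{im}(d_0)$ need not be projective. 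So the filtration you describe lives in a larger periodic derived category, and passing back to $K_0(D_n(\mathrm{Mod}_k))$ requires a resolution/d\'evissage theorem for $n$-periodic complexes. That theorem is precisely what is nontrivial here; it is not automatic (over a general commutative $k$, even the non-periodic $K_0(k) \to G_0(k)$ fails to be an isomorphism), and you neither state nor prove it.

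Relatedly, the formula $\psi([V]) = \sum (-1)^i [H_i(V)]$ already presupposes that $[H_i(V)]$ makes sense in $K_0(k)$, i.e.\ that each $H_i(V)$ admits a finite projective resolution, which is not guaranteed when $V$ is merely a periodic complex of projectives over a general $k$. So to make your inverse construction rigorous you would first need to justify that $\varphi(\psi([V]))$ is independent of the choice of resolutions of the $H_i(V)$ and then supply the missing filtration/d\'evissage input. The overall strategy --- exhibit a two-sided inverse --- is reasonable and the first three quarters of the argument are correct, but the proof as written does not close.
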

There is a group homomorphism $$\phi: K ^{Cat} _{0} (R)
\to  K _{0} (D _{2} (Mod _{R}))$$ induced by $[C] \mapsto [Hoch _{\bullet} (C)]$ for $ Hoch _{\bullet} (C)$ the
Hochschild homology complex of $ C$. This readily follows
from the fact that the Hochschild functor $ Hoch _{\bullet}$ is
``localizing'' which means in this case that a short exact
sequence $$C _{0} \to C _{1} \to C _{2}$$ in $\mathcal{A}
_{R}$ is taken to an exact triangle $$Hoch
_{\bullet} (C _{0})] \to Hoch _{\bullet} (C _{1}) \to Hoch
_{\bullet} (C _{2}) \to Hoch _{\bullet} (C _{0}) [1],  $$ in
the derived category $D _{2} (Mod _{R})$, see Keller ~\cite[Theorem
5.2]{cite_KellerOndifferentialgradedcategories}.

\begin{corollary} \label{lemma_naturalmapKperiod}
The composition map
\begin{align*}
K ^{Cat} _{0} (\mathbb{Z})
\xrightarrow{\phi} K _{0} (D
_{2} (Mod _{\mathbb{Z} })) \xrightarrow{\psi} K _{0}
(\mathbb{Z}),
\end{align*}
is non-trivial.
\end{corollary}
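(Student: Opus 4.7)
The plan is to exhibit a single class in $K^{Cat, \mathbb{Z}_2}_0(\mathbb{Z})$ whose image under $\psi \circ \phi$ is a generator of $K_0(\mathbb{Z}) \simeq \mathbb{Z}$. Since $\psi$ is an isomorphism by Theorem~\ref{theorem_Saito}, everything reduces to computing Hochschild homology of a single well-chosen object and reading off its Euler characteristic in $K_0(\mathbb{Z})$.

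For that class, I would take $C := Perf(\mathbb{Z})$, where $\mathbb{Z}$ is viewed as a strictly unital $\mathbb{Z}_2$-graded $A_\infty$-algebra over $\mathbb{Z}$ concentrated in degree $0$, with trivial higher operations. I would first check that $C \in \mathcal{A}^{\mathbb{Z}_2}_{\mathbb{Z}}$: it is pretriangulated by construction, it is Morita equivalent to $\mathbb{Z}$ itself (the Morita equivalence is the inclusion), and $\mathbb{Z}$ is trivially finitely generated projective in each $\mathbb{Z}_2$ degree, verifying the first half of Condition~\ref{equation_conditionbounded2}. For the Hochschild part, the $\mathbb{Z}_2$-graded Hochschild complex is obtained by folding the usual bar complex of $\mathbb{Z}$ over $\mathbb{Z}$ modulo $2$; since $HH_n(\mathbb{Z}/\mathbb{Z}) = \mathbb{Z}$ for $n=0$ and vanishes otherwise, the folded complex is quasi-isomorphic to $\mathbb{Z}$ placed in even degree. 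In particular it is perfect with finitely generated projective components, confirming the second half of Condition~\ref{equation_conditionbounded2}.

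Next I would compute. The map $\phi$ sends $[C]$ to the class of this folded Hochschild complex in $K_0(D_2(Mod_{\mathbb{Z}}))$, and then
\begin{equation*}
\psi \circ \phi([C]) \;=\; [H_0(Hoch_\bullet(C))] - [H_1(Hoch_\bullet(C))] \;=\; [\mathbb{Z}] - [0] \;=\; [\mathbb{Z}],
\end{equation*}
which is a generator of $K_0(\mathbb{Z}) \simeq \mathbb{Z}$. Thus the composition is non-trivial (and in fact surjective).

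The only genuine subtlety, and what I would treat as the main obstacle, is checking that $\phi$ is well defined in the $\mathbb{Z}_2$-graded setting: one must verify that the Hochschild functor sends cofiber sequences in $\mathcal{A}^{\mathbb{Z}_2}_{\mathbb{Z}}$ (as built from the Grothendieck-style homotopy pushout in Section~\ref{sec_From Ainfty categories to stable infty categories}) to exact triangles in $D_2^{perf}(Mod_{\mathbb{Z}})$. In the $\mathbb{Z}$-graded case this is the content of Keller~\cite[Theorem 5.2]{cite_KellerOndifferentialgradedcategories} invoked by To\"en in the proof of Lemma~\ref{lemma_naturalmap}. The $\mathbb{Z}_2$-graded analogue follows from the same argument: Keller's proof is formal in the grading group, relying only on the fact that the Hochschild complex is a trace-like bifunctor that commutes with the localization construction used in Section~\ref{sec_From Ainfty categories to stable infty categories}, so it descends to a well defined homomorphism of Grothendieck groups after folding the grading. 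Once this formal point is in place, the calculation above concludes the proof.
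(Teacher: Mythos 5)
Your proof is correct and proceeds by the same template as the paper's (pick one object, compute its Hochschild homology, apply $\psi$, observe the result is nonzero in $K_0(\mathbb{Z})$), but your choice of witness is different and more elementary. The paper takes the geometric example $C = FH_\bullet(L_0,L_0)$, the Floer chain algebra of the equator in $S^2$, whose $\mathbb{Z}_2$-folded Hochschild homology is $H_\bullet(S^2,\mathbb{Z})$, giving $\psi\circ\phi([C]) = 2[\mathbb{Z}]$ (not a generator). You take $Perf(\mathbb{Z})$ folded to $\mathbb{Z}_2$-grading, which has $HH_0 = \mathbb{Z}$, $HH_1 = 0$, giving $\psi\circ\phi([C]) = [\mathbb{Z}]$, a generator. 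Both establish non-triviality of the composition, and your example is simpler to verify and in fact shows the stronger statement that the composition is \emph{surjective}. What your choice loses is a by-product the paper gets for free: using the Floer algebra of the equator shows precisely that $[\widehat{\operatorname{Fuk}(S^2,\omega)}]$ is a non-trivial class in $K^{Cat,\mathbb{Z}_2}_0(\mathbb{Z})$, which is recorded as the paper's next corollary. Your final paragraph on the well-definedness of $\phi$ in the $\mathbb{Z}_2$-graded setting is reasonable due diligence but is slightly out of scope: the corollary presupposes $\phi$ is a given group homomorphism (the paper asserts the $\mathbb{Z}_2$-analogue of Lemma~\ref{lemma_naturalmap} without reproving Keller's theorem in the periodic case), so you could simply cite that; that said, flagging the point is not wrong, just not strictly necessary to close the proof.
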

\begin{proof} [Proof]
Take $C$ to be a finite type $\mathbb{Z} _{2}$-graded ${A}
_{\infty}$ algebra over $\mathbb{Z} $ such that
$HH_0 (C) = \mathbb{Z} $, $HH _{1} (C) = 0$, $HH_2 (C)
= \mathbb{Z}$, where $HH _{\bullet } (C)$ is the homology of
$Hoch _{\bullet } (C)$. As a geometric example take $C= FH _{\bullet} (L _{0}, L _{0}) $ where the
latter is the Floer cochain algebra of the equator $L _{0}$ in $S
^{2}$, then the Hochschild homology $HH _{\bullet } (C)$ is 
isomorphic to the singular homology $ H _{\bullet} (S ^{2}, \mathbb{Z}
)$, ~\cite{cite_NickSheridanOntheFukayaCategory}. 

So we get $$\psi ([Hoch _{\bullet } (C)]) = 2 [\mathbb{Z}] \neq 0 \in
K _{0} (\mathbb{Z}) \simeq \mathbb{Z}.$$
\end{proof}
A further corollary of the proof above is:
\begin{corollary} \label{cor_}
$[{\operatorname {Fuk} (S ^{2},  \omega)}]
$ represents a non-trivial element in $K ^{Cat} _{0} (\mathbb{Z})$.
\end{corollary}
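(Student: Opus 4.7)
The strategy is to reduce the claim to Corollary~\ref{lemma_naturalmapKperiod} by identifying $\widehat{\operatorname{Fuk}(S^{2},\omega)}$, up to Morita equivalence, with the pretriangulated envelope of the algebra $C = FH_{\bullet}(L_{0}, L_{0})$ used in the proof of that corollary. Concretely, I would compose the non-triviality map constructed there,
\begin{equation*}
K^{Cat, \mathbb{Z}_{2}}_{0}(\mathbb{Z}) \xrightarrow{\phi} K_{0}(D_{2}(Mod_{\mathbb{Z}})) \xrightarrow{\psi} K_{0}(\mathbb{Z}),
\end{equation*}
with the class $[\widehat{\operatorname{Fuk}(S^{2},\omega)}]$ and verify that it hits the same non-zero element $2[\mathbb{Z}]$.

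First I would invoke the classical fact that the equator $L_{0}$ split-generates $\operatorname{Fuk}(S^{2},\omega)$, so that its endomorphism algebra $C$ is Morita equivalent to $\widehat{\operatorname{Fuk}(S^{2},\omega)}$ in the sense of Section~\ref{sec_Preliminaries}. In particular, passing to idempotent completions, the $A_\infty$-functor $C \to \widehat{\operatorname{Fuk}(S^{2},\omega)}$ sending the unique object to $L_{0}$ is a Morita equivalence. Second I would check that both sides satisfy Condition~\ref{equation_conditionbounded2}: the Floer complex $C$ is free of finite rank over $\mathbb{Z}$ in each $\mathbb{Z}_{2}$-degree, and its Hochschild complex is quasi-isomorphic to $H_{\bullet}(S^{2},\mathbb{Z})$ by~\cite{cite_NickSheridanOntheFukayaCategory}, which is finitely generated and projective. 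Hence $\widehat{\operatorname{Fuk}(S^{2},\omega)}$ defines a legitimate class in $K^{Cat, \mathbb{Z}_{2}}_{0}(\mathbb{Z})$.

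The key point is then that the Hochschild homology functor $Hoch_{\bullet}$ is Morita invariant, so
\begin{equation*}
\phi\bigl([\widehat{\operatorname{Fuk}(S^{2},\omega)}]\bigr) = \phi([C]) = [Hoch_{\bullet}(C)] \in K_{0}(D_{2}(Mod_{\mathbb{Z}})).
\end{equation*}
Applying $\psi$ and using the computation $HH_{0}(C) = HH_{2}(C) = \mathbb{Z}$, $HH_{1}(C) = 0$ from the proof of Corollary~\ref{lemma_naturalmapKperiod}, this equals $2[\mathbb{Z}] \ne 0$ in $K_{0}(\mathbb{Z})$. Therefore $[\widehat{\operatorname{Fuk}(S^{2},\omega)}]$ is non-trivial in $K^{Cat,\mathbb{Z}_{2}}_{0}(\mathbb{Z})$, and hence in $K^{Cat,\mathbb{Z}_{2}}(\mathbb{Z})$.

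The main obstacle is the appeal to split-generation of $\operatorname{Fuk}(S^{2},\omega)$ by the equator; this is classical (it follows for instance from Abouzaid's split-generation criterion, since the open--closed map for $L_{0}$ hits the unit in the quantum cohomology of $S^{2}$), but it is the substantive geometric input one must cite. An alternative, which avoids generation entirely, is to observe that $\widehat{C}$ is quasi-fully-faithfully embedded in $\widehat{\operatorname{Fuk}(S^{2},\omega)}$ and to argue via a cofiber-sequence decomposition in $\mathcal{A}^{\mathbb{Z}_{2}}_{\mathbb{Z}}$; but the Morita route is cleaner, given that the Hochschild--complex invariant only sees the Morita class.
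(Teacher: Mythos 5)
Your argument reaches the correct conclusion and is structurally the same as the paper's implicit proof: both reduce to the computation in Corollary~\ref{lemma_naturalmapKperiod} for $C = FH_{\bullet}(L_0,L_0)$, both use Morita invariance of the Hochschild functor (Keller's theorem, cited at~\cite[Theorem 5.2]{cite_KellerOndifferentialgradedcategories}) to transfer $[Hoch_{\bullet}(C)]$ to $[Hoch_{\bullet}(\widehat{\operatorname{Fuk}(S^2,\omega)})]$, and both conclude from $\psi([Hoch_{\bullet}(C)]) = 2[\mathbb{Z}] \ne 0$.

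The one place where you diverge is in how you get the Morita equivalence $C \sim \widehat{\operatorname{Fuk}(S^2,\omega)}$. You invoke split-generation of the Fukaya category by $L_0$, citing Abouzaid's criterion. The paper does not need this: Proposition~\ref{prp_finitetype} fixes a concrete model of $\operatorname{Fuk}(\mathbb{CP}^n,\omega_{FS})$ whose objects are, by definition, oriented Lagrangians Hamiltonian isotopic to the Clifford torus, so that the one-object full subcategory on $L_0$ is automatically quasi-equivalent to the whole category, and the Morita equivalence with $C$ is free. This matters for two reasons. First, Abouzaid's split-generation criterion is generally stated over a field, and you are working with $k = \mathbb{Z}$; whether the criterion applies integrally is a genuine, if perhaps resolvable, subtlety that the paper avoids entirely. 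Second, the corollary is stated for the object $\widehat{\operatorname{Fuk}(S^2,\omega)}$ as constructed in the paper, i.e.\ for that particular model; invoking split-generation is answering a harder question than what is being asked. So your proof is correct in spirit but imports an unnecessary (and, over $\mathbb{Z}$, potentially delicate) geometric input; replacing it with the observation that the paper's model already has all objects isotopic to $L_0$ brings you exactly to the paper's argument.
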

One natural question is how to produce non-trivial elements
in $K ^{Cat} _{m} (R)$ for higher $m$. We will construct
such elements using Hamiltonian fibrations and Floer theory.

\section{Bundles of $ A _{\infty} $ categories from Hamiltonian fibrations} \label{sec_From geometry to fibered A categories}
The main construction of
~\cite{cite_SavelyevGlobalFukayacategoryI}  has input 
a smooth Hamiltonian fibration $M \hookrightarrow P \to Y$,
with $(M, \omega )$ a compact, monotone symplectic manifold, e.g.
$\mathbb{CP} ^{n}$. More generally, a monotone symplectic
manifold $(M, \omega ) $ is one satisfying $[\omega] = const
\cdot c _{1} (TM)$, with $const>0$,  as functionals $\pi
_{2} (M) \to \mathbb{R} ^{} $. 

Denote by $ Y _{\bullet
}$ the simplicial set of smooth maps $\Delta^{n} \to Y$ for $ \Delta^{n} $ also denoting the
standard topological $ n$-simplex.

The output is a functor
\begin{equation} \label{equation_F}
F _{P}: \Delta^{} (Y _{\bullet}) \to  w{A} _{\infty}Cat
^{\mathbb{Z} _{2}, strict} _{R}, \footnote {In ~\cite{cite_SavelyevGlobalFukayacategoryI} we
work with $R=\mathbb{Q} $ but this was only used for
inverting quasi equivalences.  As we don't need to do this
here, the construction works over
any commutative ring.}
\end{equation}
that is a bundle functor over $Y _{\bullet }$, whose concordance class $ F _{P}$ is independent of
auxiliary choices (these mainly consist of choices of almost
complex structures and Hamiltonian connections.)
For $x: \Delta^{0}  \to Y$  
a point map $F _{P} (x) = \operatorname {Fuk} (P _{x},
\omega _{x}) $, $P _{x}$ denoting the fiber over $x$. 

We can outline this further: if $\Sigma: \Delta^{1}
\to Y$ is a path from $x$ to $y$, then $F _{P} (\Sigma )$
has the set of objects $$\operatorname {obj}  F _{P} (x) \sqcup
\operatorname {obj}F _{P} (y). $$ For $L _{x} \in
\operatorname {obj}  F _{P} (x)$ and $L _{y} \in
\operatorname {obj}  F _{P} (y)$, $hom _{F _{P} (\Sigma )}
(L _{x}, L _{y})$ is a certain chain complex generated by
flat sections of an auxiliary Hamiltonian connection on
$\Sigma ^{*}P$, with end points of the sections on $L _{x}$
and $L _{y}$. \footnote {These Hamiltonian connections are
fixed per simplex, but need to satisfy compatibility
conditions involving face maps of simplices.}     

More generally, for a smooth $\Sigma: \Delta ^{n} \to Y$
denote by $v _{i}: \Delta^{0} \to Y$, $0 \leq i \leq n$,  the corresponding
vertex maps. Then $F _{P} (\Sigma )$ has the set of objects 
\begin{equation} \label{eq_objectsets}
 \sqcup _{i} \operatorname {obj}  F _{P (\Sigma)} (v _{i}).
\end{equation}
The hom sets are defined analogously so that an edge
inclusion morphism
$\Sigma ^{1}  \to \Sigma  $ in $\Delta^{}  (Y _{\bullet })$  induces a fully-faithful
embedding $F _{P} (\Sigma _{1}) \to F _{P} (\Sigma )$.
Here by an edge inclusion we mean a morphism of the type:
\begin{equation} \label{eq:overmorphisms1}  
\begin{tikzcd}
\D ^{1} \ar[r, ""] \ar [dr,
   "\Sigma ^{1}"] &  \D ^{n} \ar
   [d,"\Sigma _{2}"] \\
    & X.
\end{tikzcd}
\end{equation}

The only deep part of the construction is then the
definition of the $A _{\infty} $ structure on $F _{P}
(\Sigma )$. This involves some novel geometric topology of the
universal curve, but we will not attempt to outline this
further, instead referring the reader to the original
construction in ~\cite{cite_SavelyevGlobalFukayacategoryI}. 


To get the algebraic finiteness condition on the Fukaya
category we may impose the following condition. 
\begin{definition}\label{definition_finitetypemanifold}
We call a compact monotone symplectic manifold
of \textbf{\emph{finite monotone type}}, if its
monotone Fukaya category (see
~\cite{cite_NickSheridanOntheFukayaCategory} and also
~\cite{cite_SavelyevGlobalFukayacategoryI}) is finite
type.
\end{definition}
Let us set $\widehat{F} _{P} = T \circ F _{P} $. 
We then have the following corollary of 
~\cite[Theorem
6.6]{cite_SavelyevGlobalFukayacategoryI}. 
\begin{theorem} \label{thm_compactmonotoneWHFP} Let $M \hookrightarrow
P \to Y$ be a Hamiltonian fibration with fiber $(M, \omega
)$ finite monotone type. 
Then the assignment
\begin{equation} \label{equation_}
[P] \mapsto  [\widetilde{\mathcal{J}} _{Y _{\bullet}} (\widehat{F}
_{P})] \in hMaps (Y, K ^{Cat} (R)),
\end{equation}
is well defined, where $[P]$ is the Hamiltonian isomorphism
class of the fibration $P$, and $\widetilde{\mathcal{J} }$
is as in \eqref{eq_J}.
\end{theorem}
As an example, we have:
\begin{proposition} \label{prp_finitetype}
$\mathbb{CP} ^{n}$ is
finite monotone type.
\end{proposition}
\begin{proof} [Proof]
Specifically, we take a concrete model of $\operatorname {Fuk}
(\mathbb{CP} ^{n}) $ with the set of objects given by
the orbit of the Clifford torus $L _{0}$  by the natural
$\operatorname {Ham} (\mathbb{CP} ^{n},  \omega) $
action. In this case, it is automatic that $\operatorname {Fuk}
(\mathbb{CP} ^{n}) $ is quasi-isomorphic to
the $A _{\infty }$ algebra $hom _{\operatorname {Fuk}
(\mathbb{CP} ^{n}) } (L _{0}, L _{0})$. The
Hochschild chain complex of $hom _{\operatorname {Fuk}
(\mathbb{CP} ^{n}) } (L _{0}, L _{0})$  is quasi-isomorphic
to the Hamiltonian Floer chain complex, see for instance
~\cite{cite_NickSheridanOntheFukayaCategory}, which in
each degree is finitely and freely generated over $\mathbb{Z}
$. And so $\operatorname {Fuk}
(\mathbb{CP} ^{n}) $  is finite type for any
$R$. 
\end{proof}
In particular, taking
$Y = S ^{m}$ we get: 
\begin{corollary} \label{cor_CPn}
For each commutative ring $ R$, there are natural
homomorphisms:
\begin{equation} \label{equation_}
\phi _{m}: \pi _{m} (B \operatorname {Ham} (\mathbb{CP}
^{n}))
\to K ^{Cat} _{m} (R).
\end{equation}
And in particular natural homomorphisms:
\begin{equation} \label{equation_}
\phi  _{m}: \pi _{m} (B PU (n)) \to K ^{Cat} _{m} (R).
\end{equation}

\end{corollary}
\begin{proof} [Proof]
Let $\mathbb{CP} ^{n} \hookrightarrow  P ^{U} \to B \operatorname
{Ham} (\mathbb{CP} ^{n})$ be the 
associated universal bundle. \footnote{In case the
reader is worried about smoothness of this $P$ (which was an
assumption for the bundle functor construction), this isn't an issue as
~\cite{cite_SavelyevGlobalFukayacategoryI} treats it as
generalized smooth bundle, for which the construction still
goes through.} Using
Proposition \ref{prp_finitetype} together with Theorem
\ref{thm_compactmonotoneWHFP} we get $\phi _{m}$ as set maps.
To see that they are homomorphisms note that we have
a sequence of maps well defined up to homotopy:
\begin{equation*}
  B \operatorname {Ham} (\mathbb{CP} ^{n})
	\xrightarrow{|\widehat{F} _{P ^{U}}|} |w \widehat{\mathcal{A}} _{R}|
	\xrightarrow{\mathfrak i} K ^{Cat} (R).
\end{equation*}
By the naturality of the assignment $[P] \mapsto [F _{P}]$,
see ~\cite[Section 9.2]{cite_SavelyevGlobalFukayacategoryI}, $\phi _{m}$ are the induced
maps $$(\mathfrak i \circ |F _{P ^{U}}|) _{*}: \pi _{m} (B\operatorname {Ham} (\mathbb{CP}
^{n})) \to \pi _{m} (K ^{Cat} (R)).$$ And so are group
homomorphisms.

\end{proof}
\subsection{Generalizing to $G/T$} \label{sec_Generalizing to GT}
Given a compact connected Lie group $G$, the maximal coadjoint orbit
$G/T$ can be made monotone with respect to
the Kirillov-Kostant-Souriau symplectic structure $\omega
_{KKS}$, which for $G/T$ is uniquely determined up to deformation equivalence, see
for instance ~\cite{cite_KirilovlecturesOrbit}. We will from
now on omit this monotone $\omega _{KKS}$ from notation, but
implicitly it will always be the chosen symplectic form. 
If $\operatorname {Fuk} (G/T)$ was known to be of finite
type, we would similarly get homomorphisms $\pi _{m} (B\operatorname {Ham} (G/T)) \to
K ^{Cat} _{m} (R)$. But even in this case without an
explicit model for the Fukaya category--like in the case of
$\mathbb{CP} ^{n}$--these morphisms would be far from
concrete. We address both issues as follows.

If $G=PU (n)$ there is a distinguished monotone
Lagrangian torus $L _{GC}$ of $PU(n)/T$, arising as the
central fiber of a Gelfand-Cetlin integrable system, see
~\cite[Theorem B]{cite_MonotoneLagrangiansChoKim2021}. In
a way this extends.

For a general compact $G$, $G/T$ admits a flat toric degeneration to a toric variety
$X(\Delta)$ associated with a string polytope $\Delta$, see ~\cite{cite_Caldero2002}, ~\cite{cite_AlexeevBrion2004}. 
This deformation defines a completely integrable system on
$G/T$, with moment map to $\Delta^{} $   ~\cite{cite_ToricDegenHaradaKaveh2015}.
By anchoring the construction to the standard string
valuation, one identifies a distinguished toric degeneration where the resulting string polytope is Fano. Within this fixed
framework, the fiber over the barycenter of $\Delta$ is
a monotone Lagrangian torus that is well-defined up to
Hamiltonian isotopy ~\cite{cite_Cho2025}. This torus will be
denoted as $\mathcal{L}$. (Not that taking ``non-standard''
valuations, we may get exotic tori that are not Hamiltonian
isotopic to $\mathcal{L} $ ~\cite{cite_Cho2025}.)

Thus we get
a finite type full subcategory $C _{\mathcal{L}} \subset
\operatorname {Fuk} (G/T) $, with the set of
objects the orbit of $\mathcal{L}$ by the
$\operatorname{Ham}(G/T) $ action \footnote{$C _{\mathcal{L}
}$ is possibly not known to be Morita equivalent to
$\operatorname {Fuk} 
(G/T)$, so it is just our model in the general sense.}. 
We then construct a
functor
\begin{equation*}
 F _{P ^{U}}: \Delta^{}(B \operatorname {Ham} (G/T) _{\bullet}) \to
 w \mathcal{A} _{R}
\end{equation*}
as follows. Denote by $G/T \hookrightarrow P ^{U} \to
B \operatorname {Ham} (G/T)$ the associated universal bundle. 
For a point map $x: \Delta^{0} \to X$ we set $F _{P ^{U}} (x)$ to be the full
subcategory of $\operatorname {Fuk}  (P _{x})$, with the set of objects $\phi
_{x}(\obj C _{\mathcal{L}})$, where $\phi _{x}: G/T \to P$
is a frame over $x$ (a parametrization respecting the structure
group). The remaining construction of $F _{P ^{U}}$ is
the same as the general construction of \eqref{equation_F}. 

Given this $F _{P _{U}}$, following the proof of Corollary
\ref{cor_CPn},  we again get maps:
\begin{equation} \label{equation_phiL}
\phi _{\mathcal{L}, G}: \pi _{m} (B \operatorname {Ham}
(G/T)) \to K ^{Cat} _{m} (R).
\end{equation}
Composing with the natural map $BG \to B \operatorname {Ham}
(G/T) $ we also get:
\begin{equation} \label{equation_phiL}
\phi _{\mathcal{L}, G}: \pi _{m} (BG) \to K ^{Cat} _{m} (R),
\end{equation}
and these later maps are in essence concrete, at least
within the framework of the representation theory outlined
above. 
\begin{remark} \label{rem_}
By the last sentence we mean that the data does not hide
anything transcendental unless one counts Hamiltonian
perturbations underlying Floer theory (which is used in the
construction of $F _{\mathcal{L}}$).  But since the
perturbations can be made concrete, this is not an issue. 
\end{remark}


\section{Hamiltonian elements in classical algebraic
$K$-theory of $R$} \label{sec_From categorified algebraic $K$-theory to
classical algebraic $K$-theory}
One of the goals is to obtain Hamiltonian
elements in classical algebraic $K$-theory of $R$.  
Let $ M \hookrightarrow P \to Y$ be a Hamiltonian fibration,
with compact monotone fiber $(M, \omega) $. 
Let 
$$F _{P}: \Delta^{} (Y _{\bullet}) \to  w{A} _{\infty}Cat
^{\mathbb{Z} _{2}, strict} _{R}$$
be as in the previous section. Then the composition:
$\Phi _{P} = Hoch _{\bullet} \circ  F _{P}$ gives a functor to $ w Ch
_{R} ^{\mathbb{Z} _{2}}$, where $ Ch
_{R} ^{\mathbb{Z} _{2}}$ is the Waldhausen (strict) category of 2-periodic complexes of
projective finite rank $R$-modules. The Waldhausen
$K$-theory infinite loop space $K (Ch
_{R} ^{\mathbb{Z} _{2}})$ will be denoted by $K
^{\mathbb{Z} _{2}} (R)$.

Restrict now to $Y = B \operatorname {Ham}
(G/T)$, and $P$ the associated universal $G/T$ bundle
over $\operatorname {BHam} (G/T) $. 
Now by standard homotopy theory, $ |\Delta^{} (Y _{\bullet})|
\simeq X$, where for a category $ C$, $ |C|$ is shorthand for
the geometric realization of the nerve.
Then we get a map:
\begin{equation*}
 |F _{P}|:  B \operatorname {Ham} (G/T) \to |w Ch
_{R} ^{\mathbb{Z} _{2}}|,
\end{equation*}
whose homotopy class is well defined by the fact that the
concordance class of $ F _{P}$ is well defined. 

There is a natural map $|w Ch
_{R} ^{\mathbb{Z} _{2}}| \to K ^{\mathbb{Z}  _{2}} (R)$,
by  generalities of the Waldhausen construction, see
~\cite[Remark 8.3.2]{cite_WeibelKbook}. Composing with this
map we get a 
natural homotopy class:
\begin{equation} \label{eq_mainGT}
  [|F _{P}|:  B \operatorname {Ham} (G/T) \to  K ^{\mathbb{Z}  _{2}} ({R})].
\end{equation}
\begin{proof} [Proof of Theorem \ref{thm_intro}]
Compose the natural map 
$$BG \to \operatorname {BHam} (G/T), $$ and the map from
\eqref{eq_mainGT} to obtain $h$.
\end{proof}
Of course, at the moment $h$  is very transcendental, unless
we give
$\operatorname {Fuk} (G/T) $ a concrete model as we
do in Section \ref{sec_Generalizing to GT}. Or unless we
reinterpret the map \eqref{eq_mainGT} in some more basic
terms, as asked in Question \ref{que_1}.

\section{Langlands dual algebraic $K$-theory elements.}
\label{sec_Mirror} It is natural to ask how Langlands
duality fits into our story. 
Denote by $G ^{\vee}$ the maximal compact subgroup of
the Langlands dual $G ^{\vee} _{\mathbb{C}}$ of the complexification $G
_{\mathbb{C}}$. Via Theorem \ref{thm_intro}, we have natural maps:
\begin{align*} 
h _{G}: \pi _{m} (BG) \to K ^{\mathbb{Z} _{2}}
_{m} (R), \\
h _{G ^{\vee}}: \pi _{m} (BG ^{\vee}) \to
K ^{\mathbb{Z} _{2}} _{m} (R). 
\end{align*}
If $G$ is simply laced (e.g. $G=SU (n)$) it is not hard to see that the images
of these maps are the same. In this case $G/T = G ^{\vee}/T$
and then the
images of $G$ and $G ^{\vee}$ in $\operatorname {Ham} (G/T)
$ coincide, so that the
assertion readily follows. Motivated by this:
\begin{proposal} \label{con_} For each compact Lie group $ G$ the
images of $h _{G,m}$ and $h _{G ^{\vee},m}$ coincide.
\end{proposal}
Furthermore, there are ``lifts'' \footnote {Given Keller's
theorem outlined in Section \ref{sec_Relation with Toen's
secondary K-theory}, they may be lifts in the formal sense,
that is the Hochschild functor
should induce a map of spectra $\mathfrak h$, such that
$\phi _{\mathcal{L},G} \circ \mathfrak h = h _{G}$.
However we do not verify this.} as in
\eqref{equation_phiL}:
\begin{align*} 
\phi  _{\mathcal{L}, G}: \pi _{m} (BG) \to K ^{Cat}
_{m} (R), \\
\phi _{\mathcal{L}, G ^{\vee}}: \pi _{m} (BG ^{\vee}) \to
K ^{Cat} _{m} (R),
\end{align*}
and we can again ask if the images coincide. For $ m=0$,
this is saying that $ \operatorname {Fuk}  (G/T)$ and $\operatorname {Fuk} (G
^{\vee}/T) $ \footnote {Or to be more restrictive their
models $C _{\mathcal{L}}$ as in Section
\ref{sec_Generalizing to GT}.} represent the same classes in $K ^{Cat} _{0}
(R)$. This assertion holds if these categories were
Morita equivalent, but it is much weaker. The case $m=0$ is
then loosely comparable to the homological mirror symmetry
conjecture of Kontsevich. 

Note that the domains of $\phi
_{\mathcal{L}, G}$ and $\phi 
_{\mathcal{L}, G ^{\vee}}$ and the manifolds $G/T$ and $G
^{\vee}/T$ are generally very different. The idea is that the images in (categorified) algebraic $K$-theory 
depend only on some representation theoretic data that is
invariant under Langlands duality. Thematically, such a phenomenon already
appears in electric-magnetic dualities, and this is one
inspiration. A related work on topological aspects
of Langlands duality/correspondence is 
\cite{cite_topologicalLanglands}. The seminal reference for
electric-magnetic duality vis-à-vis Langlands duality is  
Kapustin-Witten \cite{cite_KapustinWitten}. 
\section{Acknowledgments} This work was initiated during my
stay as a member at IAS in Spring 2024, and I am very
grateful for all that was provided. Many thanks to Bertrand
To\"en, Jacob Lurie, Nick Sheridan and Chi Hong Chow for
answering many background questions.   

\bibliographystyle{abbrvurl} 
\bibliography{link.bib}

\def\cprime{$'$}
\begin{thebibliography}{10}

\bibitem{cite_AlexeevBrion2004}
V.~Alexeev and M.~Brion.
\newblock Toric degenerations of spherical varieties.
\newblock {\em Selecta Mathematica}, 10(4):453--470, 2004.
\newblock \href {https://doi.org/10.1007/s00029-004-0382-x}
  {\path{doi:10.1007/s00029-004-0382-x}}.

\bibitem{cite_BarwickInfinityWaldhausen}
C.~Barwick.
\newblock On the algebraic {{\(K\)}}-theory of higher categories.
\newblock {\em J. Topol.}, 9(1):245--347, 2016.
\newblock \href {https://doi.org/10.1112/jtopol/jtv042}
  {\path{doi:10.1112/jtopol/jtv042}}.

\bibitem{cite_topologicalLanglands}
D.~Ben-Zvi and D.~Nadler.
\newblock The character theory of a complex group.
\newblock {\em American Journal of Mathematics}, 135(6):1659--1714, 2013.
\newblock \href {https://doi.org/10.1353/ajm.2013.0046}
  {\path{doi:10.1353/ajm.2013.0046}}.

\bibitem{cite_Caldero2002}
P.~Caldero.
\newblock Toric degenerations of schubert varieties.
\newblock {\em Transformation Groups}, 7:51--60, 2002.
\newblock \href {https://doi.org/10.1007/bf01253464}
  {\path{doi:10.1007/bf01253464}}.

\bibitem{cite_Cho2025}
Y.~Cho, M.~Kim, Y.~Kim, and E.~Park.
\newblock Cluster algebras and monotone {L}agrangian tori.
\newblock {\em Advances in Mathematics}, 480:110481, 2025.
\newblock \href {https://doi.org/10.1016/j.aim.2025.110481}
  {\path{doi:10.1016/j.aim.2025.110481}}.

\bibitem{cite_MonotoneLagrangiansChoKim2021}
Y.~Cho and Y.~Kim.
\newblock Monotone lagrangians in flag varieties.
\newblock {\em International Mathematics Research Notices},
  2021(18):13892--13945, 2021.
\newblock \href {https://doi.org/10.1093/imrn/rnz227}
  {\path{doi:10.1093/imrn/rnz227}}.

\bibitem{cite_ToricDegenHaradaKaveh2015}
M.~Harada and K.~Kaveh.
\newblock Integrable systems, toric degenerations and okounkov bodies.
\newblock {\em Inventiones Mathematicae}, 202(3):927--985, 2015.

\bibitem{cite_KapustinWitten}
A.~Kapustin and E.~Witten.
\newblock Electric-magnetic duality and the geometric langlands program, 2007.
\newblock URL: \url{https://arxiv.org/abs/hep-th/0604151}, \href
  {http://arxiv.org/abs/hep-th/0604151} {\path{arXiv:hep-th/0604151}}.

\bibitem{cite_KaroubiKtheory}
M.~Karoubi and M.~Wodzicki.
\newblock Algebraic k-theory of operator rings.
\newblock {\em Journal of Geometry and Physics}, 146:103514, 2019.
\newblock URL:
  \url{https://www.sciencedirect.com/science/article/pii/S0393044019301962},
  \href {https://doi.org/https://doi.org/10.1016/j.geomphys.2019.103514}
  {\path{doi:https://doi.org/10.1016/j.geomphys.2019.103514}}.

\bibitem{cite_KellerOndifferentialgradedcategories}
B.~Keller.
\newblock {On differential graded categories.}
\newblock {\em Proceedings of the international congress of mathematicians
  (ICM), Madrid, Spain, August 22--30, 2006. Volume II: Invited lectures.
  Z\"urich: European Mathematical Society (EMS)}, 2006.

\bibitem{cite_KirilovlecturesOrbit}
A.~A. Kirillov.
\newblock Lectures on the orbit method.
\newblock {\em Providence, RI: American Mathematical Society}, 2004.

\bibitem{cite_KontsevichHomologicalSmooth}
M.~Kontsevich and Y.~Soibelman.
\newblock Notes on {{\(A_\infty \)}}-algebras, {{\(A_\infty \)}}-categories and
  non-commutative geometry.
\newblock In {\em Homological mirror symmetry. New developments and
  perspectives}, pages 153--219. Berlin: Springer, 2009.
\newblock \href {https://doi.org/10.1007/978-3-540-68030-7_6}
  {\path{doi:10.1007/978-3-540-68030-7_6}}.

\bibitem{cite_modelstructureAinftyCat}
M.~Ornaghi.
\newblock {A model structure on the category of A$_\infty$-categories with
  strict morphisms}, 2025.
\newblock URL: \url{https://arxiv.org/abs/2506.22847}, \href
  {http://arxiv.org/abs/2506.22847} {\path{arXiv:2506.22847}}.

\bibitem{cite_colimits}
M.~Ornaghi.
\newblock {The Homotopy Theory of A∞categories}.
\newblock {\em International Mathematics Research Notices}, 2025(11):rnaf127,
  05 2025.
\newblock \href
  {http://arxiv.org/abs/https://academic.oup.com/imrn/article-pdf/2025/11/rnaf127/63393439/rnaf127.pdf}
  {\path{arXiv:https://academic.oup.com/imrn/article-pdf/2025/11/rnaf127/63393439/rnaf127.pdf}},
  \href {https://doi.org/10.1093/imrn/rnaf127}
  {\path{doi:10.1093/imrn/rnaf127}}.

\bibitem{cite_PolterovichThegeometryofthegroupofsymplecticdiffeomorphism}
L.~Polterovich.
\newblock {\em {The geometry of the group of symplectic diffeomorphism.}}
\newblock {Lectures in Mathematics, ETH Z\"urich. Basel: Birkh\"auser. xii,
  132~p. sFr.~34.00; DM~44.00; \"oS~321.00 }, 2001.

\bibitem{cite_QuillenHigherAlgebraicKtheory}
D.~Quillen.
\newblock Higher algebraic k-theory i.
\newblock In H.~Bass, editor, {\em Algebraic K-Theory I: Higher K-Theories},
  volume 341 of {\em Lecture Notes in Mathematics}, pages 85--147. Springer,
  1973.
\newblock \href {https://doi.org/10.1007/BFb0067053}
  {\path{doi:10.1007/BFb0067053}}.

\bibitem{cite_RognesK4Zistrivial}
J.~Rognes.
\newblock {{\(K_4(\mathbb{Z})\)}} is the trivial group.
\newblock {\em Topology}, 39(2):267--281, 2000.
\newblock \href {https://doi.org/10.1016/S0040-9383(99)00007-5}
  {\path{doi:10.1016/S0040-9383(99)00007-5}}.

\bibitem{cite_SaitoPeriodicDerived}
S.~Saito.
\newblock A note on {Grothendieck} groups of periodic derived categories.
\newblock {\em J. Algebra}, 609:552--566, 2022.
\newblock \href {https://doi.org/10.1016/j.jalgebra.2022.06.036}
  {\path{doi:10.1016/j.jalgebra.2022.06.036}}.

\bibitem{cite_SavelyevGlobalFukayacategoryII}
Y.~Savelyev.
\newblock {Global Fukaya category II}.
\newblock {\em \url{http://yashamon.github.io/web2/papers/fukayaII.pdf}}.

\bibitem{cite_SavelyevGlobalFukayacategoryI}
Y.~Savelyev.
\newblock Global {Fukaya} category {I}.
\newblock {\em Int. Math. Res. Not.}, 2023(21):18302--18386, 2023.
\newblock \href {https://doi.org/10.1093/imrn/rnad013}
  {\path{doi:10.1093/imrn/rnad013}}.

\bibitem{cite_Seidelpi1ofsymplecticautomorphismgroupsandinvertiblesinquantumhomologyrings}
P.~Seidel.
\newblock $\pi_1$ of symplectic automorphism groups and invertibles in quantum
  homology rings.
\newblock {\em Geom. Funct. Anal.}, 7(6):1046--1095, 1997.
\newblock \href {https://doi.org/10.1007/s000390050037}
  {\path{doi:10.1007/s000390050037}}.

\bibitem{cite_NickSheridanOntheFukayaCategory}
N.~{Sheridan}.
\newblock {On the Fukaya category of a Fano hypersurface in projective space.}
\newblock {\em {Publ. Math., Inst. Hautes \'Etud. Sci.}}, 124:165--317, 2016.
\newblock \href {https://doi.org/10.1007/s10240-016-0082-8}
  {\path{doi:10.1007/s10240-016-0082-8}}.

\bibitem{cite_SuslinWodAlgebraicKtheory}
A.~A. Suslin and M.~Wodzicki.
\newblock Excision in algebraic k-theory.
\newblock {\em Annals of Mathematics}, 136(1):51--122, 1992.
\newblock URL: \url{http://www.jstor.org/stable/2946546}.

\bibitem{cite_ToenThehomotopytheoryofdg-categoriesandderivedMoritatheory}
B.~{To{\"e}n}.
\newblock {The homotopy theory of dg-categories and derived Morita theory.}
\newblock {\em Invent. Math.}, 167(3):615--667, 2007.

\bibitem{cite_ToenLectures}
B.~{To{\"e}n}.
\newblock {\em {Lectures on dg-categories.}}
\newblock Berlin: Springer, 2011.
\newblock \href {https://doi.org/10.1007/978-3-642-15708-0_5}
  {\path{doi:10.1007/978-3-642-15708-0_5}}.

\bibitem{cite_WeibelKbook}
C.~A. Weibel.
\newblock {\em The {{\(K\)}}-book. An introduction to algebraic
  {{\(K\)}}-theory}, volume 145 of {\em Grad. Stud. Math.}
\newblock Providence, RI: American Mathematical Society (AMS), 2013.

\end{thebibliography}
\end{document}